\newtheorem{thm}{Theorem}[section]
\newtheorem{cor}[thm]{Corollary}
\newtheorem{lem}[thm]{Lemma}
\theoremstyle{definition}
\theoremstyle{remark}
\newtheorem{rem}[thm]{Remark}
\numberwithin{equation}{section}
\newcommand{\beas}{\begin{eqnarray*}}
\newcommand{\eeas}{\end{eqnarray*}}
\newcommand{\bes} {\begin{equation*}}
\newcommand{\ees} {\end{equation*}}
\newcommand{\be} {\begin{equation}}
\newcommand{\ee} {\end{equation}}
\newcommand{\bea} {\begin{eqnarray}}
\newcommand{\eea} {\end{eqnarray}}
\newcommand{\ra} {\rightarrow}
\newcommand{\txt} {\textmd}
\newcommand{\R}{\mathbb R}
\newcommand{\T}{\mathbb T}
\newcommand{\N}{\mathbb N}
\newcommand{\g}{\mathfrak{g}}
\begin{document}

\title[\tiny{Improved Ingham-type result on $\R^d$ and on nilpotent Lie Groups}]{Improved Ingham-type result on $\R^d$ and on connected, simply connected nilpotent Lie Groups}

\author{\tiny{Mithun Bhowmik}}

\address{Stat-Math Unit, Indian Statistical Institute, 203 B. T. Road, Kolkata - 700108,  India.}

\email{mithunbhowmik123@gmail.com}

\thanks{This work was supported by Indian Statistical Institute, India (Research fellowship).}


\begin{abstract}
In \cite{BRS} we have characterized the existance of a non zero function vanishing on an open set in terms of the decay of it's Fourier transform on the $d$-dimensional Euclidean space, the $d$-dimensional torus and on connected, simply connected two step nilpotent Lie groups. In this paper we improved these results on $\R^d$ and prove analogus results on connected, simply connected nilpotent Lie groups.
\end{abstract}

\subjclass[2010]{Primary 22E25; Secondary 22E30, 43A80}

\keywords{nilpotent Lie group}
\maketitle

\section{Introduction}
It is a well known fact in harmonic analysis that if the Fourier transform of an integrable function on the real line is very rapidly decreasing then the function can not vanish on a nonempty open set unless it vanishes identically. For instance if, let $f\in L^1(\R)$ and $a>0$ be such that its Fourier transform satisfies the estimate
\bes
|\widehat f(\xi)| \leq Ce^{-a|\xi|}, \:\:\:\: \txt{ for all }\xi \in \R.
\ees 
If $f$ vanishes on a nonempty open set then $f$ is identically zero. This initial observation motivates one to endeavour for a more optimal decay of the Fourier transform $\widehat{f}$ for such a conclusion to hold. For instance we may ask:  if $\widehat{f}$ decays faster than $1/(1+|\cdot|)^n$ for all $n \in \N$ but slower than the function  $e^{-a|\cdot|}$, can $f$ vanish on a nonempty open set without being identically zero? A more precise question  could be:  is there a nonzero integrable function $f$ on $\R$ vanishing on a nonempty open set such that its Fourier transform satisfies the following estimate 
\be \label{introdecay} 
|\widehat f(\xi)| \leq Ce^{-\frac{|\xi|}{\log|\xi|}}, \:\:\:\: \txt{ for large } |\xi|?
\ee
The answer to the above question is in the negative and follows from a classical result due to Ingham \cite{I}. Analogous results were also obtained by Paley-Wiener, Levinson and Hirschman \cite{PW1, PW, L1, Hi}. In \cite{BRS}, we have obtained analogues results on $d$-dimensioan Euclidean spaces, $d$-dimensional torus and on connected, simply connected two step nilpotent Lie groups. In the context of Euclidean space we have the following theorem \cite{BRS}. For $f\in L^1(\R^d)$, we shall define its Fourier transform $\widehat f$ by
\bes
\widehat f(\xi)=\int_{\R^d}f(x)~ e^{-2\pi i x\cdot \xi}~ dx,\:\:\:\: \txt{ for } \xi\in\R^d.
\ees
For $1 < p \leq 2$, the definition of Fourier transform of $f \in L^p(\R^d)$ is extended in the usual way. 
\begin{thm} \label{introthm}
Let $f\in L^p(\R^d)$, for some $p$ with $p\in [1, 2]$ and $\psi: [0,\infty) \ra [0,\infty)$ be a locally integrable function. Suppose that the Fourier transform $\widehat f$ of $f$ satisfies the following estimate
\be \label{introest}
|\widehat f(\xi)| \leq  C P(\xi) e^{-\psi(\|\xi\|)},
\ee
for almost every $\xi \in \R^d$, where $P(\xi)$ is a polynomial and we define 
\bes
I = \int_1^{\infty} \frac{\psi(t)}{t^2} dt.
\ees

\begin{enumerate}
\item[(a)] Let $\psi(t) = \theta(t)t,$ for $t \in [0, \infty)$, where $\theta : [0,\infty) \ra [0,\infty)$ is a decreasing function which decreases to zero as $t$ goes to infinity. If $f$ vanishes on any nonempty open set in $\R^d$ and $I$ is infinite then $f$ is zero on $\R^d$. Conversely, if $I$ is finite, then given any positive real number $l$ there exists a nonzero even function $f \in C_c(\R^d)$ supported in ball $B(0,l)$ of radius $l$, centered at zero satisfying (\ref{introest}).
\item[(b)] Let $\psi$ be an increasing function on $[0,\infty)$. If $f$ vanishes on any nonempty open set in $\R^d$ and $I$ is infinite then $f$ is zero on $\R^ d$. Conversely, if $I$ is finite, given any positive real number $l$ there exists a nonzero $f \in C_c(\R^d)$ supported in $B(0,l)$ satisfying (\ref{introest}).
\item[(c)] If
\bes
\txt{supp }f \subseteq \{x\in \R^d~|~ x\cdot \eta \leq s\}, 
\ees
and $I$ is infinite then $f$ is zero on $\R^d$. Conversely, if $I$ is finite then  there exists a nonzero $f \in L^2(\R^d)$ vanishing on the set $\{x \in \R^d : x\cdot \eta\geq x_0 \}$, for some $x_0 \in \R$ and $\eta \in S^{d-1}$ satisfying (\ref{introest}).
\end{enumerate}
\end{thm}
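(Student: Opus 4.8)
The statement falls into a uniqueness half (if $I=\infty$, then $f\equiv 0$) and an existence half (if $I<\infty$, build a counterexample), and in all of (a)--(c) the plan is to push the problem down to the real line, where it becomes a variant of the classical Ingham--Paley--Wiener--Levinson circle of theorems, and then to lift back. For the uniqueness parts of (a) and (b) I would first reduce to $d=1$ by slicing. After a translation assume $f=0$ on $B(0,2\delta)$, fix $\rho\in C_c^\infty(\R^{d-1})$ supported in $B'(0,\sqrt 3\,\delta)$, and set $\Phi(t)=\int_{\R^{d-1}}f(t,y)\rho(y)\,dy$. Fubini shows $\Phi\in L^p(\R)$ and $\widehat\Phi(\tau)=\int_{\R^{d-1}}\widehat f(\tau,\eta)\,\widehat\rho(-\eta)\,d\eta$, while $\Phi$ vanishes on $(-\delta,\delta)$: for $|t|<\delta$ and $y\in\operatorname{supp}\rho$ one has $\|(t,y)\|^2<\delta^2+3\delta^2=(2\delta)^2$, so $(t,y)\in B(0,2\delta)$. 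Thus
\be\label{onedimbnd}
|\widehat\Phi(\tau)|\;\le\; C\int_{\R^{d-1}}P(\tau,\eta)\,e^{-\psi(\|(\tau,\eta)\|)}\,|\widehat\rho(\eta)|\,d\eta .
\ee

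From \eqref{onedimbnd} one reads off a one-dimensional estimate of the same shape. In (b), $\psi$ increasing gives $\psi(\|(\tau,\eta)\|)\ge\psi(|\tau|)$, and since $\widehat\rho$ is Schwartz the right side is $\le C\,Q(|\tau|)\,e^{-\psi(|\tau|)}$ for a polynomial $Q$; in (a), $\theta$ decreasing gives $\psi(\lambda t)\le\lambda\psi(t)$ for $\lambda\ge 1$, from which a short (and, when $\psi$ is not monotone, somewhat delicate) argument yields $|\widehat\Phi(\tau)|\le C\,Q(|\tau|)\,e^{-c\psi(|\tau|)}$ for a fixed $c>0$ — harmless, since neither $c$ nor polynomial factors affect the divergence of $I$. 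I would then quote the one-dimensional theorem: if $\Phi\in L^p(\R)$ vanishes on an interval and $|\widehat\Phi(\tau)|\le C\,Q(\tau)\,e^{-\psi(|\tau|)}$ with $\int_1^\infty\psi(t)t^{-2}\,dt=\infty$, then $\Phi\equiv 0$. When $\psi$ is regular this amounts to saying $\Phi$ lies in a quasi-analytic Denjoy--Carleman class — take the automatically log-convex moment sequence $A_n=\int_{\R}|\tau|^n Q(\tau)e^{-\psi(|\tau|)}\,d\tau$; the quasi-analyticity criterion reads $\int_1^\infty\log T_A(r)\,r^{-2}\,dr\asymp\int_1^\infty\psi(r)\,r^{-2}\,dr=\infty$, with $T_A(r)=\sup_{n\ge 0}r^n/A_n$ — while for less regular $\psi$ one decomposes $\Phi$ into its pieces supported on $[\delta,\infty)$ and $(-\infty,-\delta]$ and runs a Hardy-space/logarithmic-integral argument. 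In either case $\Phi\equiv 0$ for every admissible $\rho$, hence $f=0$ a.e.\ on the tube $\R\times B'(0,\sqrt 3\,\delta)$; repeating the slicing along each coordinate axis and translating the centre then propagates the vanishing to all of $\R^d$.

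For the uniqueness part of (c) I would exploit the half-space support directly. Taking coordinates with $\eta=e_1$, the functions $f_\tau(x)=e^{2\pi\tau(x_1-s)}f(x)$ satisfy $\|f_\tau\|_p\le\|f\|_p$ for all $\tau\ge 0$, so for a.e.\ $\xi'\in\R^{d-1}$ the map $\zeta\mapsto\widehat f(\zeta,\xi')$ extends holomorphically to a half-plane as a function of Hardy class, with boundary modulus $|\widehat f(\xi_1,\xi')|\le C\,P(\xi_1,\xi')\,e^{-\psi(\|(\xi_1,\xi')\|)}$. By the classical rigidity theorem for such functions, this is $\equiv 0$ as soon as $\int_{\R}\log^-|\widehat f(\xi_1,\xi')|\,(1+\xi_1^2)^{-1}\,d\xi_1=\infty$; and the integrand dominates $\bigl(\psi(\|(\xi_1,\xi')\|)-\log(CP)\bigr)(1+\xi_1^2)^{-1}$, whose integral diverges precisely when $I=\infty$ (substitute $t=\|(\xi_1,\xi')\|$). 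Hence $\widehat f\equiv 0$, i.e.\ $f=0$.

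For the existence halves one uses that $I<\infty$ is equivalent to $\int_{\R}\psi(|\xi|)(1+\xi^2)^{-1}\,d\xi<\infty$, i.e.\ the Paley--Wiener logarithmic-integral condition on the weight $e^{\psi}$. For (c) I would take $g\in L^2(\R)$ supported in $(-\infty,x_0]$ with $|\widehat g|\le C e^{-\psi}$ on $\R$, realised as the boundary value of an outer function on a half-plane with modulus $e^{-\psi}$ (its defining integral converging exactly by the log-integral condition), and set $f=g\otimes G$ with $G$ the $(d-1)$-dimensional bump constructed next; then $f\in L^2(\R^d)$ vanishes on $\{x_1>x_0\}$ and $|\widehat f(\xi)|\le Ce^{-\psi(|\xi_1|)}e^{-\psi(\|\xi'\|)}\le Ce^{-\psi(\|\xi\|)}$, once $\psi$ is replaced by a comparable subadditive majorant. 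For (a) and (b) the core object is, given $l>0$, a nonzero $h$ with $\operatorname{supp}h\subseteq(-l/2,l/2)$ and $|\widehat h(\xi)|\le C e^{-\psi(|\xi|)}(1+|\xi|)^{-2}$: this is the Ingham--Paley--Wiener infinite product $\widehat h(\xi)=\prod_{n\ge1}\bigl(\sin(a_n\xi)/(a_n\xi)\bigr)$ with $\sum_n a_n<l/2$ and the $a_n$ chosen so that the product has at least that rate of decay — which is possible exactly because $I<\infty$. The extra polynomial decay forces $h\in C_c(\R)$; $\widehat h$, and hence $h$, is even; and a $d$-fold tensor product (again using subadditivity of $\psi$) gives an even function in $C_c(\R^d)$ supported in $B(0,l)$. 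The two genuine obstacles, I expect, are this infinite-product construction and the sharp one-dimensional uniqueness theorem for irregular $\psi$; the slicing reduction and the Hardy-class argument for (c) should be comparatively routine.
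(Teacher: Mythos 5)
You should first be aware that this paper never proves Theorem \ref{introthm}: it is quoted from \cite{BRS}, and the paper only proves the weighted-$L^q$ generalizations (Theorems \ref{ingrn}--\ref{paleywiener}), deferring the core one-dimensional and quasi-analytic steps back to \cite{BRS}. Where a comparison is possible, your plan for part (c) is essentially the paper's own method for Theorem \ref{paleywiener}: slice in the variables transverse to $\eta$ (the paper uses the partial Fourier transform $g_y=\mathcal{F}_{d-1}f(\cdot,y)$, you fix $\xi'$ directly), land in a half-line--supported $L^2$ function, and invoke the one-dimensional logarithmic-integral rigidity theorem; that part, and your reduction of (b) (where $\psi(\|\xi\|)\ge\psi(|\tau|)$ pointwise because $\psi$ is radial increasing), are sound in outline.

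The genuine gap is in your reduction of part (a). With $\Phi(t)=\int f(t,y)\rho(y)\,dy$, $\rho\in C_c^\infty(\R^{d-1})$, you claim $|\widehat\Phi(\tau)|\le C\,Q(|\tau|)e^{-c\psi(|\tau|)}$. But $\psi(t)=\theta(t)t$ need not be monotone, so the inequality $\psi(\|(\tau,\eta)\|)\gtrsim\psi(|\tau|)$ is only available where $\|\eta\|\lesssim|\tau|$; on the region $\|\eta\|\gtrsim|\tau|$ the only decay you have is that of $\widehat\rho$, and the Fourier transform of a nonzero compactly supported smooth function never decays exponentially --- its decreasing envelope $e^{-\omega}$ necessarily satisfies $\int_1^\infty\omega(t)t^{-2}dt<\infty$. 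Hence the bound you can actually derive has the form $Q(|\tau|)e^{-c\psi(c'|\tau|)}+B(|\tau|)$ with $B$ having a convergent logarithmic integral; such a bound is satisfied by nonzero $C_c^\infty(\R)$ functions vanishing near the origin, so it cannot force $\Phi\equiv 0$, and the moment sequence $A_n$ built from it fails the Denjoy--Carleman condition. The multiplication by a transverse cutoff destroys exactly the information Ingham's condition encodes; to prove (a) one must keep the full radial decay, e.g.\ by working directly in $\R^d$ with quasi-analytic classes defined through iterates of the Laplacian (Bochner's theorem --- note the citation \cite{B}), or by radializing $f$ with spherical means/harmonics rather than a compactly supported cutoff; this is the route of \cite{BRS}. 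Two smaller weak points: in the existence half of (c) your tensor construction needs $e^{-\psi(|\xi_1|)}e^{-\psi(\|\xi'\|)}\lesssim e^{-\psi(\|\xi\|)}$, and for a merely locally integrable $\psi$ a ``comparable subadditive majorant'' with $I$ still finite need not exist (sparse spikes of super-linear height defeat it); and your fallback for irregular $\psi$ in one dimension --- splitting $\Phi$ into two half-line pieces --- loses the Fourier decay of $\Phi$, so the Hardy-space argument does not apply to the pieces as stated.
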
 
In \cite{BRS} we have shown that the original statements of the results of Ingham, Levinson and Paley-Wiener on $\R$ are follows form the theorem above.
We shall show in this paper that it is possible to prove analogues of Theorem \ref{introthm} under the following variant of (\ref{introest})
\be\label{modiintroest}
\int_{\R^d}\frac{|\widehat f(\xi)|^qe^{q\psi(\|\xi\|)}}{(1+\|\xi\|)^N} ~ d\xi<\infty,
\ee
where $q\in [1,\infty]$ (with obvious modification for $q=\infty$) and $N$ is a positive real number. Our next main result in this paper is an analogue of Theorem \ref{introthm}, c) on the connected, simply connected nilpotent Lie group $G$. 

We shall use the following standard notation in this paper: $\txt{supp}~f$ denotes the support of the function $f$ and $C$ denotes a constant whose value may vary. For a finite set $A$ we shall use the symbol $\#A$ to denote the number of elements in $A$. $B(0,r)$ denotes the open ball of radius $r$ centered at $0$ in $\R^d$ and $\bar{B}(0,r)$ denotes its closure. For $x,y \in \R^d$, we shall use $\|x\|$ to denote the norm of the vector $x$ and $x \cdot y$ to denote the Euclidean inner product of the vectors $x$ and $y$. A function $\psi : \R^d \ra [0,\infty)$ is said to be radially increasing (or radially decreasing) if $\psi$ is radial and satisfies the condition $\psi(x)\geq \psi(y)$ (or $\psi(x)\leq \psi(y)$) whenever $|x|\geq |y|$. We shall often consider a radial function on $\R^d$ as an even function on $\R$ or equivalently, as a function on $[0,\infty)$.




\section{Results of Ingham, Levinson and Paley-Wiener on $\R^d$}

In this section our aim is to improve Theorem \ref{introthm} in the following way. Instead of the pointwide decay (\ref{introest}) on the Fourier transform side we will consider the weighted $L^q$ estimate for $1\leq q\leq \infty$ of the form (\ref{modiintroest}). Using convolution techniques we will be able to show that it is enough to prove the result for $q=1$ and $N=0$ and rest of the proof goes like the proof of Theorem \ref{introthm}. First we state and prove an analogue of Theorem \ref{introthm} (a) on $\R^d$.

\begin{thm}\label{ingrn}
Let $\theta:\R^d\rightarrow [0,\infty)$ be a radially decreasing measurable function with $\lim_{\|\xi\|\to\infty}\theta(\xi)=0$ and
\be \label{idefn}
I=\int_{\|\xi\|\geq 1}\frac{\theta (\xi)}{\|\xi\|^d} ~ d\xi.
\ee
\begin{enumerate}
\item[(a)] Let $f\in L^p(\R^d)$, $p\in [1,2]$, be such that
\be \label{ingcond}
\int_{\R^d} \frac{|\widehat{f}(\xi)|^q ~ e^{q \theta (\xi) \|\xi\|}}{(1+ \|\xi\|)^N} ~ d\xi < \infty, 
\ee
for some $q \in [1, \infty)$ and some $N\geq 0$ or 
\be \label{ingcondinfty}
|\widehat f(\xi)| \leq C ~ |P(\xi)| ~ e^{-\theta(\xi)\|\xi\|}, \:\:\:\: \txt{ for almost every } \xi\in \R^d,
\ee
where $P$ is a polynomial. If $f$ vanishes on a nonempty open subset in $\R^d$ and $I$ is infinite, then $f$ is zero almost everywhere on $\R^d$. 
\item[(b)] If $I$ is finite then there exists a nontrivial $f\in C_c^{\infty}(\R^d)$ satisfying the estimate (\ref{ingcond}) (or (\ref{ingcondinfty}) if $q=\infty$).
\end{enumerate}
\end{thm}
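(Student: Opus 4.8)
The plan is to reduce both parts of Theorem \ref{ingrn} to the already established Theorem \ref{introthm}. For part (a), the only feature not already present in Theorem \ref{introthm} is that the decay hypothesis is a weighted $L^q$ bound rather than a pointwise one, and I would remove this by a convolution. Fix a nonempty open set $U$ on which $f$ vanishes and a ball $B(x_0,2\epsilon)\subseteq U$, and let $\varphi\in C_c^{\infty}(B(0,\epsilon))$ be arbitrary. Then $g:=f*\varphi\in L^p(\R^d)$ (Young's inequality) vanishes on the nonempty open ball $B(x_0,\epsilon)$, and $\widehat g=\widehat f\,\widehat\varphi$. Putting $A(\xi)=|\widehat f(\xi)|\,e^{\theta(\xi)\|\xi\|}(1+\|\xi\|)^{-N/q}$ and $B(\xi)=\widehat\varphi(\xi)(1+\|\xi\|)^{N/q}$, Hölder's inequality with exponents $q,q'$ gives
\[
\int_{\R^d}|\widehat g(\xi)|\,e^{\theta(\xi)\|\xi\|}\,d\xi=\int_{\R^d}|A(\xi)B(\xi)|\,d\xi\le\|A\|_{q}\,\|B\|_{q'}<\infty,
\]
since $\|A\|_{q}<\infty$ is exactly (\ref{ingcond}) and $\|B\|_{q'}<\infty$ because $\widehat\varphi$ is Schwartz; and if (\ref{ingcondinfty}) holds instead, directly $|\widehat g(\xi)|e^{\theta(\xi)\|\xi\|}\le C|P(\xi)\widehat\varphi(\xi)|\in L^1(\R^d)$. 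So $g$ satisfies the hypotheses of the theorem with $q=1$, $N=0$, and moreover $\widehat g\in L^1$, so $g$ is continuous. Granting the theorem in that case we get $f*\varphi\equiv0$ for every $\varphi\in C_c^{\infty}(B(0,\epsilon))$, hence $f\equiv0$ a.e. by letting $\varphi$ run through an approximate identity.

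It then remains to handle the case $q=1$, $N=0$: $f\in L^p$, continuous, vanishing on a nonempty open set, $\int_{\R^d}|\widehat f(\xi)|e^{\theta(\xi)\|\xi\|}\,d\xi<\infty$, and $I$ infinite. After a translation — which multiplies $\widehat f$ only by a unimodular factor and so preserves everything — we may assume $f$ vanishes on a ball centred at the origin, and then I would follow the proof of Theorem \ref{introthm}(a) line by line. The point is that that argument uses the Fourier decay of $f$ only through estimates of the form $\int_{\R^d}|\widehat f(\xi)|\,|\Phi(\xi)|\,d\xi$ with $\Phi$ band-limited and $\check\Phi$ decaying like $e^{-\theta(\|\cdot\|)\|\cdot\|}$, and such integrals are bounded by $\big(\sup_\xi|\Phi(\xi)|e^{-\theta(\xi)\|\xi\|}\big)\int_{\R^d}|\widehat f(\xi)|e^{\theta(\xi)\|\xi\|}\,d\xi$, so the weighted $L^1$ bound is precisely what is needed; the divergence of $I$ enters as the Ingham-type quasi-analyticity condition forcing $f\equiv0$. (Alternatively one can first reduce to $d=1$ by integrating $f$ against a $C_c^{\infty}$ bump in the $d-1$ transverse variables supported in a small transverse ball, obtaining a function on $\R$ that vanishes on an interval and whose one-dimensional Fourier transform inherits a weighted $L^1$ bound, and then invoke the classical one-dimensional Ingham theorem for $L^p(\R)$, $1\le p\le2$.)

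For part (b), note that the finiteness of $I$ in (\ref{idefn}) is the same as $\int_1^{\infty}\theta(t)/t\,dt<\infty$, so the converse half of Theorem \ref{introthm}(a), applied with radius $l/2$, supplies a nonzero even $f_1\in C_c(\R^d)$, supported in $B(0,l/2)$, with $|\widehat{f_1}(\xi)|\le C|P(\xi)|e^{-\theta(\|\xi\|)\|\xi\|}$ for some polynomial $P$. Fix $\rho=\eta*\widetilde\eta$ with $\eta\in C_c^{\infty}(B(0,l/4))$ and $\widetilde\eta(x)=\overline{\eta(-x)}$, so that $\rho\in C_c^{\infty}(B(0,l/2))$ and $\widehat\rho=|\widehat\eta|^2$ is a nowhere-vanishing Schwartz function, and set $f:=f_1*\rho\in C_c^{\infty}(B(0,l))$. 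Then $f\not\equiv0$ because $\widehat f=\widehat{f_1}\widehat\rho$ is not identically zero, and for $M>\deg P$ we get $|\widehat f(\xi)|\le C_M(1+\|\xi\|)^{-M}|P(\xi)|e^{-\theta(\|\xi\|)\|\xi\|}\le Ce^{-\theta(\|\xi\|)\|\xi\|}$, so $|\widehat f(\xi)|e^{\theta(\|\xi\|)\|\xi\|}\le C$; hence $f$ satisfies (\ref{ingcond}) for every $q\in[1,\infty)$ and every $N>d$, and (\ref{ingcondinfty}) with $P\equiv1$.

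The step I expect to be the main obstacle is the reduction in part (a): convolution with a bump supported in a small ball harmlessly shrinks the open set on which $f$ vanishes, but one must verify that the weighted $L^1$ control of the Fourier transform genuinely survives the relevant convolutions — and, if one passes through $d=1$, the relevant transverse integrations, where the possible non-monotonicity of $t\mapsto\theta(t)t$ has to be absorbed by a harmless regularization of $\theta$. This is where Hölder's inequality and the rapid decay of $\widehat\varphi$ do the essential work.
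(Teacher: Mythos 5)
Your proposal is correct and follows essentially the same route as the paper: part (a) is reduced, via convolution with a $C_c^\infty$ bump and H\"older's inequality, to the weighted $L^1$ case $q=1$, $N=0$, whose Ingham-type core is then deferred to the proof of Theorem \ref{introthm}(a) (Theorem 2.2 of \cite{BRS}), and part (b) is obtained by convolving the compactly supported example from Theorem \ref{introthm}(a) with a $C_c^\infty$ function. Your write-up is in fact slightly more careful than the paper's on two minor points: guaranteeing nontriviality of the convolved example by taking $\widehat\rho=|\widehat\eta|^2$ nowhere vanishing, and concluding $f\equiv 0$ via an approximate identity rather than via the a.e.\ nonvanishing of a single fixed $\widehat\phi$.
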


\begin{proof}
We shall first prove the assertion (b). 
For $q= \infty$ it follows from Theorem \ref{introthm}, (a). For $q\in [1, \infty)$ we choose $\phi_1 \in C_c^\infty(\R^d)$ with $\txt{supp } \phi_1 \subseteq B(0, l/2)$ and consider the function $f = f_0* \phi_1$. Clearly, support of the function $f$ is contained in $\overline{B(0, l)}$ and 
\bes
\int_{\R^d} \frac{|\widehat f(\xi)|^q ~ e^{q \theta(\xi)\|\xi\|}}{(1+\|\xi\|)^N}~ d\xi \leq  C \int_{\R^d} |\widehat {\phi_1}(\xi)|^q ~ d\xi < \infty.
\ees 
This, in particular, proves (b). It now remains to prove (a).
 
For part (a), we first show that it suffices to prove the case $q=1, N=0$. Suppose $f\in L^p(\R^d)$ vanishes on an open set $U\subseteq \R^d$ and satisfies (\ref{ingcond}), for some $q> 1$ and $N\in \N$. Since the condition (\ref{ingcond}) is invariant under translation of $f$ we can assume that $f$ vanishes on $B(0, l)$, for some positive $l$. If we choose $\phi \in C_c^\infty(\R^d)$ supported in $B(0, l/2)$ then $f*\phi$ vanishes on the ball $B(0, l/2)$. In fact, if $\|x\|< l/2$ then
\bes
\int_{\R^d} f(x-y) \phi(y) ~ dy = \int_{B\left(0,\frac{l}{2}\right)} f(x-y) \phi(y) ~ dy = 0,
\ees
as 
\bes
\|x-y\| \leq \|x\| + \|y\| < l.
\ees
By using H\"older's inequality we get
\beas
&& \int_{\R^d}|\widehat{(f*\phi)}(\xi)| ~ e^{\theta(\xi)\|\xi\|}~ d\xi \\
&\leq & \left( \int_{\R^d} \frac{|\widehat f(\xi)|^q ~ e^{q \theta(\xi)\|\xi\|}}{(1+\|\xi\|)^N} \right)^{\frac{1}{q}} \|(1+\|\xi\|)^N \widehat \phi(\xi)\|_{L^{q'}(\R^d)}\\
& < & \infty,
\eeas
as $N/ q$ is smaller than $N$. Here $q'$ satisfies the relation 
\bes
\frac{1}{q}+ \frac{1}{q'} = 1.
\ees
Hence, by the case $q=1, N=0$ it follows that $f*\phi$ vanishes identically. As $\phi \in C_c^\infty(\R^d)$ we have that $\widehat \phi$ is nonzero almost everywhere. This implies that $\widehat f$ vanishes almost everywhere and so does $f$. The same technique can be applied to reduce the case $q=1$ and $N\in \N$ to the case $q=1$ and $N=0$ by using H\"older's inequality. For the case $q= \infty$, we get from (\ref{ingcondinfty}) that 
\bes
\int_{\R^d}|\widehat{f*\phi}(\xi)|~ e^{\theta(\xi)\|\xi\|} ~ d\xi \leq \int_{\R^d} ~ |P(\xi)| ~ |\widehat \phi(\xi)| ~ d\xi < \infty.
\ees 
So, without loss of generality, we assume that $f \in L^p(\R^d
)$ such that $\widehat f$ satisfies the condition
\be \label{ingsplcond}
\int_{\R^d} |\widehat f(\xi)| ~ e^{\theta(\xi) \|\xi\|} ~ d\xi = C' < \infty.
\ee
Rest of the proof goes similarly as the proof of   Theorem 2.2 in \cite{BRS}.
\end{proof}

As in Theorem \ref{ingrn} we will have the following version of Theorem \ref{introthm}, (b) on $\R^d$.
\begin{thm}\label{levrn}
Let $\psi:\R^d \rightarrow [0,\infty)$ be a radially increasing function and
\bes
I=\int_{\|\xi\|\geq 1} \frac{\psi(\xi)}{\|\xi\|^{d+1}} ~ d\xi.
\ees

\begin{enumerate}
\item[(a)] Let $f\in L^p(\R^d)$, $p\in [1,2]$, be a function satisfying the estimate
\be \label{levcondp}
\int_{\R^d} \frac{|\widehat{f}(\xi)|^q ~ e^{q \psi(\xi)}}{(1+ \|\xi\|)^N} ~ d\xi < \infty, 
\ee
for some $q \in [1, \infty)$ and some $N\geq 0$ or 
\be \label{levcond}
|\widehat f(\xi)| \leq C ~ |P(\xi)| ~ e^{-\psi(\xi)}, \:\:\:\: \txt{ for almost every } \xi\in \R^d,
\ee
where $P$ is a polynomial. If $f$ vanishes on a nonempty open subset in $\R^d$ and $I$ is infinite, then $f$ is zero almost everywhere on $\R^d$. 
\item[(b)] If $I$ is finite then there exists a nontrivial $f\in C_c^{\infty}(\R^d)$ satisfying the estimate (\ref{levcondp}) (or (\ref{levcond}) if $q=\infty$).
\end{enumerate}
\end{thm}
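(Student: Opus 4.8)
\textbf{Proof proposal for Theorem \ref{levrn}.}
The plan is to mirror the structure of the proof of Theorem \ref{ingrn} as closely as possible, reducing both the converse and the direct statement to settings already handled. For part (b), when $q = \infty$ the conclusion is exactly Theorem \ref{introthm}, (b): finiteness of $I = \int_{\|\xi\| \geq 1} \psi(\xi)\|\xi\|^{-d-1}\,d\xi$ is, after passing from the radial function $\psi$ on $\R^d$ to the associated function on $[0,\infty)$ and changing to polar coordinates, equivalent to the finiteness of $\int_1^\infty t^{-2}\psi(t)\,dt$ appearing there, so there is a nonzero $f \in C_c^\infty(\R^d)$ supported in $B(0,l)$ with $|\widehat f(\xi)| \leq C|P(\xi)|e^{-\psi(\xi)}$. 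For finite $q \in [1,\infty)$ I would take that $f$, or rather start from the $C_c^\infty$ function $f_0$ produced by Theorem \ref{introthm}, pick $\phi_1 \in C_c^\infty(\R^d)$ with $\txt{supp }\phi_1 \subseteq B(0,l/2)$, and set $f = f_0 * \phi_1$; then $\txt{supp }f \subseteq \overline{B(0,l)}$, and since $\widehat f = \widehat{f_0}\,\widehat{\phi_1}$ decays faster than any polynomial while $e^{q\psi(\xi)}$ is dominated by $e^{q\theta(\|\xi\|)\|\xi\|}$-type growth controlled by $I < \infty$, the integral in (\ref{levcondp}) converges, exactly as in the proof of Theorem \ref{ingrn}, (b).

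For part (a) I would again first reduce to the case $q = 1$, $N = 0$. Since (\ref{levcondp}) is translation invariant we may assume $f$ vanishes on $B(0,l)$; choosing $\phi \in C_c^\infty(\R^d)$ supported in $B(0,l/2)$, the convolution $f * \phi$ vanishes on $B(0,l/2)$ by the triangle-inequality argument already used, and H\"older's inequality with exponents $q, q'$ gives
\bes
\int_{\R^d} |\widehat{f*\phi}(\xi)|\, e^{\psi(\xi)}\, d\xi \leq \left( \int_{\R^d} \frac{|\widehat f(\xi)|^q\, e^{q\psi(\xi)}}{(1+\|\xi\|)^N}\, d\xi \right)^{1/q} \|(1+\|\xi\|)^{N/q'}\,\widehat\phi\|_{L^{q'}(\R^d)} < \infty,
\ees
the last factor being finite because $\widehat\phi$ is Schwartz; here I have absorbed the $(1+\|\xi\|)^N$ weight using that $N/q \le N$ exactly as before. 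The case $q = \infty$ is immediate from (\ref{levcond}) since $|P|\,|\widehat\phi|$ is integrable. Since $\widehat\phi \neq 0$ almost everywhere, once we know $f * \phi \equiv 0$ we conclude $f \equiv 0$. So it remains to treat $f \in L^p(\R^d)$ with $\int_{\R^d} |\widehat f(\xi)|\, e^{\psi(\xi)}\, d\xi < \infty$ and $f$ vanishing on an open ball, and show $I$ infinite forces $f = 0$.

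For that remaining core case I would follow the proof of the corresponding one-dimensional/Levinson part of \cite{BRS} (Theorem 2.2 there, or the analogue used for Theorem \ref{introthm}, (b)). The idea is the standard Ingham--Levinson scheme: the hypothesis $\int |\widehat f|e^{\psi} < \infty$ lets one, for any direction $\eta \in S^{d-1}$, restrict to a line and form the holomorphic extension of the one-variable slice transform into a strip or half-plane whose width is governed by $\psi$; vanishing of $f$ on an open set translates into vanishing of this analytic object on an interval; and a quantitative log-integral estimate shows that the divergence of $I$ (equivalently $\int t^{-2}\psi(t)\,dt = \infty$ after radializing) is exactly the condition under which a nonzero such function cannot vanish on any interval — this is where one invokes the classical theorem of Ingham/Levinson. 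Doing this for a spanning family of directions and using that $f$ is determined by its slices forces $\widehat f$, hence $f$, to vanish. The main obstacle, as in \cite{BRS}, is the passage between the radial weight $\psi(\xi)$ on $\R^d$ and the one-dimensional weight controlling the slices, and checking that $I = \int_{\|\xi\| \geq 1}\psi(\xi)\|\xi\|^{-d-1}\,d\xi$ being infinite is the right hypothesis after that reduction (the extra power $\|\xi\|^{-1}$ compared to Theorem \ref{ingrn} reflecting that here $\psi$ itself, not $\theta(\|\xi\|)\|\xi\|$, is the exponent); but since this is carried out in detail in \cite{BRS}, here it suffices to note that the argument goes through verbatim once the reductions above have been made.
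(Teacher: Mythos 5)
Your proposal matches the paper's own treatment: the paper offers no separate proof of Theorem \ref{levrn}, saying only that it follows ``as in Theorem \ref{ingrn}'', which is precisely your argument --- the convolution $f_0*\phi_1$ construction for part (b), and for part (a) the translation/convolution/H\"older reduction to the case $q=1$, $N=0$ (your weight $(1+\|\xi\|)^{N/q'}$ should be $(1+\|\xi\|)^{N/q}$, but this is immaterial since $\widehat\phi$ is Schwartz), followed by an appeal to the Levinson-type theorem of \cite{BRS}. One caveat: your sketched description of that core step (holomorphic extension into a strip plus a log-integral criterion) is really the mechanism of the half-space/Paley--Wiener case treated in Theorem \ref{paleywiener}, whereas for a function vanishing only on an open set the argument in \cite{BRS} proceeds via the one-dimensional Ingham--Levinson result on slices (a quasi-analyticity-type argument); since you, like the paper, defer this step to \cite{BRS}, the proof stands, but the parenthetical mechanism as literally stated would not work here.
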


In the following, we shall improved Theorem \ref{introthm}, c) which will be used in the next section to prove an analogous result for connected simply connected nilpotent Lie groups (see Theorem \ref{nilpotent}).

\begin{thm} \label{paleywiener}
Let $\psi: [0,\infty)\ra [0, \infty)$ be a locally integrable function and 
\bes
I=\int_1^{\infty}\frac{\psi (t)}{t^2}dt.
\ees 
\begin{enumerate}
\item[(a)] Let $f \in L^p(\R^d)$, $p\in [1, 2]$, be such that \bes
\txt{supp }f \subseteq \{x\in \R^d~|~ x\cdot \eta \leq s\}, 
\ees
for some $\eta\in S^{d-1}$ and $s\in \R$ and $\widehat f$ satisfies the estimate 
\be \label{pwqest}
\int_{\R^d} \frac{|\widehat{f}(\xi)|^q ~ e^{q \psi(|\xi\cdot \eta|)}}{(1+ \|\xi\|)^N} ~ d\xi < \infty.
\ee 
for some $q\in [1, \infty)$ and some $N\geq 0$, or
\be \label{pwinftyest}
|\widehat f(\xi)| \leq C ~ (1+\|\xi\|)^N ~ e^{-\psi(\xi \cdot \eta)}, \:\:\:\: \txt{ for almost every } \xi \in \R^d. 
\ee
If the integral $I$ is infinite then $f$ is the zero function.

\item[(b)] If $\psi$ is non-decreasing and $I$ is finite then there exists a nontrivial $f\in C_c(\R^d)$ satisfying the estimate (\ref{pwqest}), for some $q\in [1, \infty)$ and all $\eta\in S^{d-1}$ or (\ref{pwinftyest}), for $q= \infty$ and all $\eta\in S^{d-1}$.
\end{enumerate}
\end{thm}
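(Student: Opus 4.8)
The strategy mirrors the one used for Theorem \ref{ingrn}: a convolution reduction to the case $q=1$, $N=0$, followed by an appeal to (an adaptation of) the corresponding argument from \cite{BRS}. I will treat parts (a) and (b) separately.

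\textbf{Part (b), the constructive direction.} Since $\psi$ is non-decreasing and $I<\infty$, Ingham's classical construction on $\R$ produces a nonzero even $g\in C_c(\R)$, supported in a fixed interval $[-l,l]$, whose Fourier transform satisfies $|\widehat g(t)|\le C e^{-\psi(|t|)}$ for all $t\in\R$. I would then set $f(x)=g(x\cdot e_1)\prod_{j=2}^d h(x_j)$ for a fixed $h\in C_c^\infty(\R)$, so that $\widehat f(\xi)=\widehat g(\xi_1)\prod_{j\ge 2}\widehat h(\xi_j)$; more invariantly, pick any coordinate direction and tensor a one-dimensional Ingham function against compactly supported bump functions in the complementary directions. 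Because $\psi(|\xi\cdot\eta|)\le \psi(\|\xi\|)$ is false in general but $\psi(|\xi\cdot\eta|)$ only sees one coordinate, the key point is that for the resulting $f$, $e^{-\psi(\xi\cdot\eta)}$ must dominate $|\widehat f(\xi)|$ \emph{for every} $\eta\in S^{d-1}$ simultaneously; this forces the construction to be genuinely radial-like. The cleanest route is to take the $d$-dimensional Ingham function from Theorem \ref{introthm}(b), applied with the radial profile $\psi(\|\cdot\|)$, which is supported in $B(0,l)$ and satisfies $|\widehat f(\xi)|\le C e^{-\psi(\|\xi\|)}\le Ce^{-\psi(|\xi\cdot\eta|)}$ for all $\eta$ since $\psi$ is non-decreasing and $|\xi\cdot\eta|\le\|\xi\|$. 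This immediately gives (\ref{pwinftyest}) for $q=\infty$, and integrating against $(1+\|\xi\|)^{-N}$ (choosing $N>d$, or absorbing a fixed bump as in Theorem \ref{ingrn}(b)) gives (\ref{pwqest}) for any finite $q$.

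\textbf{Part (a), reduction to $q=1$, $N=0$.} As in the proof of Theorem \ref{ingrn}, convolve $f$ with $\phi\in C_c^\infty(\R^d)$ supported in a small ball. If $\operatorname{supp}f\subseteq\{x\cdot\eta\le s\}$ and $\operatorname{supp}\phi\subseteq B(0,\delta)$, then $\operatorname{supp}(f*\phi)\subseteq\{x\cdot\eta\le s+\delta\}$, so the support/half-space hypothesis is preserved. For the integrability: by H\"older's inequality,
\bes
\int_{\R^d}|\widehat{f*\phi}(\xi)|\,e^{\psi(|\xi\cdot\eta|)}\,d\xi\le\left(\int_{\R^d}\frac{|\widehat f(\xi)|^q e^{q\psi(|\xi\cdot\eta|)}}{(1+\|\xi\|)^N}\,d\xi\right)^{1/q}\left\|(1+\|\xi\|)^{N/q}\widehat\phi(\xi)\right\|_{L^{q'}(\R^d)}<\infty,
\ees
since $\widehat\phi$ is Schwartz. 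The case $q=\infty$ with (\ref{pwinftyest}) is handled as in Theorem \ref{ingrn}: $\int|\widehat{f*\phi}|e^{\psi(\xi\cdot\eta)}\le\int(1+\|\xi\|)^N|\widehat\phi(\xi)|\,d\xi<\infty$. Since $\widehat\phi\ne 0$ a.e., proving $f*\phi\equiv 0$ yields $f\equiv 0$. So I may assume $q=1$, $N=0$, and (after rotating so $\eta=e_1$) that $\operatorname{supp}f\subseteq\{x_1\le s\}$ with $\int_{\R^d}|\widehat f(\xi)|e^{\psi(|\xi_1|)}\,d\xi<\infty$.

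\textbf{Part (a), the core argument.} Now I would follow the proof of Theorem \ref{introthm}(c) from \cite{BRS}. Fix $\xi'\in\R^{d-1}$ and consider the slice function $g_{\xi'}(x_1)$ obtained by integrating $\widehat f(\xi_1,\xi')e^{2\pi i x_1\xi_1}$ in $\xi_1$ — equivalently, the partial inverse Fourier transform in the last $d-1$ variables of $f$, which is supported in $\{x_1\le s\}$ as a function of $x_1$. Its one-variable Fourier transform is $\xi_1\mapsto\widehat f(\xi_1,\xi')$, and by Fubini $\int_\R|\widehat f(\xi_1,\xi')|e^{\psi(|\xi_1|)}\,d\xi_1<\infty$ for a.e. $\xi'$. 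The one-dimensional Ingham/Paley--Wiener theorem (the $d=1$ case, which is the classical statement recovered from \cite{BRS}) then applies: a function on $\R$ supported in $(-\infty,s]$ whose Fourier transform $F$ satisfies $\int|F(t)|e^{\psi(|t|)}\,dt<\infty$ with $\int_1^\infty\psi(t)/t^2\,dt=\infty$ must vanish. Hence $\widehat f(\cdot,\xi')\equiv 0$ for a.e. $\xi'$, so $\widehat f\equiv 0$ and $f\equiv 0$. The main obstacle, and the step requiring care, is justifying that the one-dimensional theorem can be invoked slice-by-slice: one needs the partial inverse transform in $\xi'$ to be well-defined and to inherit both the half-space support and the weighted $L^1$ bound in the $\xi_1$-variable for almost every $\xi'$, which is exactly where Fubini's theorem and the a.e. finiteness of the slice integrals enter. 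Once that measure-theoretic bookkeeping is in place, the reduction to the scalar Ingham theorem is immediate, since $\psi$ there enters only through $|\xi_1|=|\xi\cdot\eta|$.
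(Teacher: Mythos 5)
Your overall architecture coincides with the paper's: the convolution reduction to $q=1$, $N=0$ (with the correct observation that the half-space support is only enlarged by $\delta$), the rotation/translation to $\eta=e_1$, $s=0$, the slicing of $f$ by the partial Fourier transform in the orthogonal variables together with Fubini, and, for part (b), the use of the radial construction plus the inequality $\psi(|\xi\cdot\eta|)\le\psi(\|\xi\|)$ for non-decreasing $\psi$ (the paper routes this through Theorem \ref{levrn}(b), you through Theorem \ref{introthm}(b) plus a bump; these are the same device). The genuine gap is in the core one-dimensional step of part (a). The statement you invoke --- that a function on $\R$ supported in $(-\infty,s]$ whose Fourier transform $F$ satisfies $\int_{\R}|F(t)|\,e^{\psi(|t|)}\,dt<\infty$, with $\int_1^\infty\psi(t)t^{-2}\,dt=\infty$, must vanish --- is not the classical Ingham/Levinson/Paley--Wiener theorem, nor the $d=1$ case of Theorem \ref{introthm}(c): all of those assume a pointwise bound of the form $|F(t)|\le C\,|P(t)|\,e^{-\psi(|t|)}$, and the weighted $L^1$ condition does not imply such a bound. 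What you are citing is precisely the $d=1$, $q=1$, $N=0$ case of the theorem under proof, so as written the argument is circular. The paper closes exactly this step with a genuine argument: it applies the Paley--Wiener log-integral theorem (Theorem 2.7 of \cite{BRS}: a nonzero $L^2$ function supported on a half-line has $\int_{\R}|\log|\widehat{g_y}(t)||\,(1+t^2)^{-1}\,dt<\infty$) and then shows, by splitting the logarithm into $\log^+$ and $\log^-$, using $\log^+\bigl(|\widehat{g_y}(t)|e^{\psi(|t|)}\bigr)\le|\widehat{g_y}(t)|e^{\psi(|t|)}$ and the divergence of $\int\psi(|t|)(1+t^2)^{-1}dt$, that $\int_{\R}|\log|\widehat{g_y}(t)||\,(1+t^2)^{-1}\,dt$ diverges in either of the two possible cases, forcing $g_y\equiv0$. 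This log-splitting argument is the real content of part (a); your plan omits it, while the step you single out as the main obstacle (the Fubini bookkeeping for the slices) is routine and is dispatched in a line in the paper.

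Two secondary points. First, the one-dimensional log-integral theorem is an $L^2$ statement, so one needs the slices $g_y$ to lie in $L^2(\R)$; the paper secures this by first reducing $p\in[1,2)$ to $p=2$ (as in the proof of Theorem \ref{introthm}(c)) before slicing, a reduction your outline does not mention. Second, your description of $g_{\xi'}$ as a ``partial inverse Fourier transform'' is a harmless slip (it is the forward partial transform of $f$ in the last $d-1$ variables, as in the paper's $g_y=\mathcal{F}_{d-1}f(\cdot,y)$), and your H\"older reduction with the weight $(1+\|\xi\|)^{N/q}$ is correct and matches the paper's. Once you supply the $p=2$ reduction and, crucially, the derivation of the one-dimensional weighted-$L^1$ statement from the Paley--Wiener log-integral theorem, your proof becomes essentially the paper's proof.
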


\begin{proof}
We shall first prove a) for the case $p=2$. Then for all $p \in [1,2)$ the result will follow by reducing it to the case $p=2$ as was done in the proof of Theorem \ref{introthm}, (c). As in the proof of Theorem \ref{ingrn} it suffices to prove the case  $q=1$ and $N=0$. Now, by using translation and rotation of the function $f$, we can assume without loss of generality that $\eta= e_1= (1, 0, \cdots, 0)$ and $s=0$. Then, by writing $\xi=(\xi_1, \xi_2, \cdots, \xi_d)$ the hypothesis (\ref{pwqest}) becomes 
\be \label{reducedhyp}
\int_{\R^d} |\widehat{f}(\xi)| ~ e^{\psi(|\xi_1|)} ~ d\xi < \infty.
\ee
For $y \in \R^{d-1}$, we now define $g_y$ by
\bes
g_{y}(x)= \mathcal{F}_{d-1} f(x, y), \:\:\:\: \txt{ for almost every } x\in \R.
\ees It then follows that for almost every $y\in \R^{d-1}$, $g_y\in L^2(\R)$ with 
\bes
\txt{supp }g_y \subseteq \{x\in \R ~|~ x\leq 0\},
\ees 
and by (\ref{reducedhyp})
\be \label{lpcond}
\int_{\R} |\widehat {g_y}(t)| ~ e^{\psi(|t|)} ~ dt < \infty.
\ee
As $y$ varies over a set of full $(d-1)$ dimensional Lebesgue measure, we just need to prove that $g_y$ is the zero function. By the Paly-Wiener theorem (Theorem 2.7, \cite{BRS}) it suffices to show that 
\bes
\int_{\R}{\frac{|\log(|\widehat{g_y}(t)|)|}{1+t^2}dt} = \infty.
\ees
If 
\be \label{gyeintfinite}
\int_\R \frac{|\log(|\widehat{g_y}(t)| e^{\psi(|t|)})|}{1+t^2} ~ dt < \infty 
\ee
then
\beas
\int_{\R}{\frac{|\log(|\widehat{g_y}(t)|)|}{1+t^2}dt} &=& \int_{\R}{\frac{|\log(|\widehat{g_y}(t)|e^{\psi(|t|)})- \psi(|t|)|}{1+t^2}dt}\\
&\geq& \int_{\R}{\frac{\psi(|t|)}{1+t^2}dt}- \int_{\R}{\frac{|\log(|\widehat{g_y}(t)| e^{\psi(|t|)})|}{1+t^2}dt}.
\eeas
As $I$ is infinite, it follows from (\ref{gyeintfinite}) that
\bes
\int_{\R} \frac{|\log(|\widehat{g_y}(t)|)|}{1+t^2} ~ dt
\ees
is divergent. Hence, by the Paly-Wiener theorem (Theorem 2.7, \cite{BRS}) it follows that $g_y$ is the zero function. Now, suppose 
\be \label{gyeintinfinite}
\int_\R \frac{|\log(|\widehat{g_y}(t)| e^{\psi(|t|)})|}{1+t^2} ~ dt = \infty .
\ee
For a measurable function $F$ on $\R^d$, we define
\beas
\log^+|F(x)|= \max\{\log|F(x)|, 0\} \\
\log^-|F(x)|= - \min\{\log|F(x)|, 0\},
\eeas
and hence 
\bes
\vline ~\log|F(x)|~\vline = \log^+|F(x)| + \log^-|F(x)|.
\ees
As $\log^+|F(x)|$ is always smaller than $|F(x)|$ we get that
\bes 
\int_{\R}{\frac{\log^+ \left(|\widehat{g_y}(t)| e^{\psi(|t|)}\right)}{1+t^2}dt}\leq \int_{\R}{\frac{|\widehat{g_y}(t)|e^{\psi(|t|)}}{1+t^2}dt} < \infty,
\ees
by (\ref{lpcond}).  From (\ref{gyeintinfinite}) we now conclude that
\bes 
\int_\R \frac{\log^-(|\widehat{g_y}(t)| e^{\psi(|t|)})}{1+t^2} ~ dt = \infty .
\ees
But
\beas
\int_{\R}{\frac{\log^-\left(|\widehat{g_y}(t)| e^{\psi(|t|)}\right)}{1+t^2}dt} &= & \int_{\big\{t ~\vline~ |\widehat{g_y}(t)|e^{\psi(|t|)}\leq 1\big\}}{\frac{\log^-\left(|\widehat{g_y}(t)| e^{\psi(|t|)}\right)}{1+t^2}dt}\\
&\leq& \int_{\R}{\frac{\log^-|\widehat{g_y}(t)|}{1+t^2}dt},
\eeas
as on the set $\{t ~\vline~ |\widehat{g_y}(t)|e^{\psi(|t|)}\leq 1\}$ we have 
\bes
|\widehat{g_y}(t)| \leq |\widehat{g_y}(t)|e^{\psi(|t|)} \leq 1,
\ees and hence
\bes
\log^-\left(|\widehat{g_y}(t)|\right) \geq \log^-\left(|\widehat{g_y}(t)|e^{\psi(|t|)}\right).
\ees
Therefore the integral
\bes
\int_{\R}{\frac{\log^-\left(|\widehat{g_y}(t)|\right)}{1+t^2} ~ dt}
\ees
is divergent. Hence, the integral
\bes
\int_{\R}{\frac{|\log\left(|\widehat{g_y}(t)|\right)|}{1+t^2} ~ dt}
\ees
is divergent. By the Paly-Wiener theorem (Theorem 2.7, \cite{BRS}) it now follows that $g_y$ is the zero function. This completes the proof of part a). To prove Part b) we observe that since $\psi$ is non decreasing
\bes
\psi(|\xi \cdot \eta|) \leq \psi(\|\xi\|), \:\:\:\: \txt{ for } \xi \in \R^d.
\ees 
Therefore the proof follows from Theorem \ref{levrn}, (b).
\end{proof}

\begin{rem} \em
\begin{enumerate}
\item It is clear from the statement of Theorem \ref{paleywiener} that if $f$ is compactly supported continuous function then $\eta\in S^{d-1}$ can be taken to be arbitrary.
\item If $\psi$ is assumed to be a radial increasing function then Theorem \ref{introthm}, (c) follows from the case $p=\infty$ of Theorem \ref{paleywiener} and also from Theorem \ref{levrn}.
\end{enumerate}
\end{rem}

\section{Nilpotent Lie Groups}
In this section, our aim is to prove an analogue of Theorem \ref{paleywiener} in the context of connected, simply connected nilpotent Lie groups. 

\subsection{Preliminaries on Nilpotent Lie Groups}
In this section, we shall discuss the preliminaries and notation related to  connected, simply connected nilpotent Lie groups. These are standard and can be found, for instance, in \cite{BB, BTH, CG}.

Let $G$ be a connected, simply connected nilpotent Lie group with Lie algebra $\g$ and let $\g^*$ be the vector space of real-valued linear functionals on $\g$. In this case, the exponential map $\exp : \g \ra G$ becomes an analytic diffeomorphism, which enables us to identify $G$ with $\g\cong \R^d$, where $d= \dim \g$. Let 
\be \label{JHB}
\{0\}=\g_0 \subset \g_1 \subset \cdots \subset \g_d=\g
\ee
be a Jordan-H\"older series of ideals of the nilpotent Lie algebra $\g$ such that $\dim \g_j= j$, for $j=0, \cdots, d$ and ad$(X)\g_{j}\subseteq \g_{j-1}$, for $j=1, \cdots, d$ and for all $X\in \g$ (\cite{CG}, Theorem 1.1.9). We choose from this sequence a Jordan-H\"older basis 
\be \label{JHB}
\{X_1, \cdots, X_d\}, \:\:\:\: \txt{ where } X_j\in \g_j \backslash \g_{j-1},
\ee  
for $j=1, \cdots, d$. In particular, $\mathbb{R}X_1$ is contained in the center of $\g$. Let $\{X_1^*,\cdots, X_d^*\}$ be the basis of $\g^*$  dual to $\{X_1,\cdots, X_d\}$. We define coordinates on $G$ by 
\bes
(x_1, \cdots, x_d) \cong \exp(x_1 X_1 + \cdots + x_d X_d ). 
\ees
By the Baker-Campbell-Hausdorff formula $\exp(X) \exp(Y ) = \exp(Z)$, where
\be \label{BCHformula}
Z = X+Y + \frac{1}{2}[X,Y] + \frac{1}{12}[X,[X,Y]] - \frac{1}{12}[Y,[X,Y]]+\cdots
\ee
(\cite{CG}, P.12), which is a finite sum as $G$ is a nilpotent Lie group. We introduce a `norm function' on $G$ defined by 
\bes
\|x\|=\left(x_1^2+\cdots+x_d^2\right)^{\frac{1}{2}}, \:\:\:\: x=\exp(x_1X_1+\cdots+x_dX_d)\in G, ~  x_j\in \mathbb{R}.
\ees
The 
composite map 
\beas
&& \mathbb{R}^d \longrightarrow  \mathfrak{g}\longrightarrow G, \\
&& (x_1,\cdots,x_d) \mapsto \sum_{j=1}^d{x_jX_j}\mapsto
\exp(\sum_{j=1}^d{x_jX_j}),
\eeas
is a diffeomorphism and maps the Lebesgue measure on $\R^d$ to a Haar measure on $G$. We shall also identify 
$\g^*$ with $\R^d$ via the map 
\bes
\nu=(\nu_1, \cdots, \nu_d)\rightarrow \sum_{j=1}^d{\nu_jX_j^*}, \:\:\:\: \nu_j\in \R,
\ees
where $\{X_1^*, \cdots, X_d^*\}$ is the dual basis of $\g^*$. On $\g^*$ we define the Euclidean norm relative to this basis by 
\bes
\|\sum_{j=1}^d{\nu_jX_j^*}\|=\left(\nu_1^2+\cdots + \nu_d^2\right)^{\frac{1}{2}}=\|\nu\|.
\ees

For $\nu\in \g^*$ we have a natural bilinear map \bes
(X, Y) \mapsto B_\nu(X, Y)= \nu([X, Y]).
\ees
This map is antisymmetric and the radical of this map is 
\bes
\{X\in \g ~|~ B_\nu(X, Y)= 0, \:\: \txt{  for all } Y\in \g \} = r_\nu.
\ees 
Clearly, $B_\nu$ descends to nondegenerate, antisymmetric bilinear map $\tilde B_\nu : \g/ r_\nu \times \g/ r_\nu \ra \R$. Hence, $r_\nu$ is of even codimension in $\g$ (\cite{CG}, Lemma 1.3.2). Since the coadjoint orbit $O_\nu$ of $\nu$ is diffeomorphic to $G/R_\nu$, it follows that $O_\nu$ is even dimensional. An index $j\in\{1, 2, \cdots, d\}$ is called a jump index for $\nu\in \g^*$ if
\bes
r_\nu +\g_j\neq r_\nu + \g_{j-1},
\ees
where the Jordan-H\"older sequence $\g_j$ are as in (\ref{JHB}) (see \cite{BB}). Let $e(\nu)$ be the set of jump indices for $\nu\in \g^*$. This set contains $\dim(O_\nu)$ indices, which is an even number. Moreover, there are disjoint sets of indices $P$ and $Q$ such that $P\cup Q= \{1, \cdots, d\}$ and a $G$-invariant nonempty Zariski open set $\mathcal U \subseteq \g^*$ such that $e(\nu)= P$, for all $\nu\in \mathcal{U}$ (\cite{CG}, Theorem 3.1.2; \cite{BB}). The elements of $\mathcal{U}$ are called generic linear functionals. For $\nu\in \mathcal{U}$, we define 
\bes
B_{\nu, P}=\nu([X_i,X_j])_{i,j\in P}.
\ees 
Then the Pfaffian $Pf(\nu)$ is given by 
\be \label{Pfdefn}
|Pf(\nu)|^2= \det B_{\nu, P}, \:\:\:\: \txt{ for all } \nu\in \mathcal{U}.
\ee
We define 
\bes
V_P= \txt{span }\{X_i^*: i\in P\}, \txt{ and } V_Q = \txt{span} \{X_i^*: i\in Q\}.
\ees

Let $d\nu$ be the Lebesgue measure on $V_Q$ normalized so that the unit cube spanned by $\{X_i^*: i\in Q\}$ has volume $1$.
Then $\g^*=V_Q\oplus V_P$ and $\mathcal W=\mathcal U\cap V_Q$ is a cross-section for the coadjoint orbits through points in $\mathcal U$.

For $\phi\in L^1(G)$ we define its operator-valued Fourier transform by
\bes
\pi(\phi)= \int_{G}\phi(g) \pi(g^{-1}) ~dg, \:\:\:\: \txt{ for } \pi \in \widehat G,
\ees
where $dg$ denotes the Haar measure of $G$ and $\widehat G$ is the unitary dual of $G$. If $\phi\in L^1(G)\cap L^2(G)$ then $\pi(\phi)$ is a Hilbert-Schmidt operator and $\|\pi(\phi)\|_{HS}$ will denote the Hilbert-Schmidt norm of $\pi(\phi)$. If $d\nu$ denotes the element of Lebesgue measure on $\mathcal W$, then $\mu$ is the Plancherel measure for $\hat G$, where $d\mu= |Pf(\nu)|d\nu$. The Plancherel formula is thus given by
\bes
\|\phi\|_2^2=\int_G |\phi(g)|^2~dg= \int_{\mathcal W}\|\pi_\nu(\phi)\|_{HS}^2~d\mu(\nu), \:\:\:\: \textit{ for all } \phi\in L^1(G)\cap L^2(G).
\ees

We shall now state and prove the following analogue of Theorem \ref{introthm}, (c) on a connected, simply connected nilpotent Lie group $G$.

\begin{thm} \label{nilpotent}
Let $\psi: [0,\infty) \ra [0,\infty)$ be a locally integrable  function and 
\bes
I= \int_{1}^{\infty} \frac{\psi(t)}{t^2}~dt.
\ees
Suppose $f\in C_c(G)$ is such that 
\be \label{nilest} 
\int_{\mathcal W} \|\pi_{\nu}(f)\|^2_{HS} ~ e^{2\psi(|\nu_1|)} ~ |Pf(\nu)| ~ d\nu < \infty,
\ee 
where $\nu_1= \nu(X_1)$ and $X_1$ is given by (\ref{JHB}). If $I$ is infinite then $f$ vanishes identically on $G$. Conversely, if $\psi$ is nondecreasing and $I$ is finite, then there exists a nontrivial $f\in C_c(G)$ such that (\ref{nilest}) holds.
\end{thm}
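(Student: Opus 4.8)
The plan is to reduce the statement, via the central direction $X_1$, to the one--dimensional Paley--Wiener theorem (Theorem 2.7, \cite{BRS}), following the scheme of the proof of Theorem \ref{paleywiener}. Identify $G$ with $\R^d$ through the exponential coordinates $(x_1,\dots,x_d)\leftrightarrow\exp(x_1X_1+\dots+x_dX_d)$, and write $x'=(x_2,\dots,x_d)$. Since $\R X_1$ lies in the center of $\g$, the Baker--Campbell--Hausdorff formula (\ref{BCHformula}) gives $\exp(x_1X_1+Y)=\exp(x_1X_1)\exp(Y)$ for every $Y\in\g$, and for $\nu\in\mathcal W$ the operator $\pi_\nu(\exp(tX_1))$ is the scalar $e^{2\pi i\nu_1 t}$. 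For $f\in C_c(G)$ put
\bes
\widetilde f(\lambda,x')=\int_{\R} f(x_1,x')\,e^{-2\pi i\lambda x_1}\,dx_1 ,
\ees
the partial Fourier transform in the central variable. A short computation, writing $g=\exp(x_1X_1)\exp(x_2X_2+\dots+x_dX_d)$ and using centrality of $X_1$, gives
\bes
\pi_\nu(f)=\int_{\R^{d-1}}\widetilde f(\nu_1,x')\,\pi_\nu\bigl(\exp(-x_2X_2-\dots-x_dX_d)\bigr)\,dx' ,
\ees
so that $\pi_\nu(f)$ depends on $f$ only through the slice $\widetilde f(\nu_1,\cdot)$.

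Next I would disintegrate the Plancherel measure along the $\nu_1$--axis. Because $X_1$ is central, $B_\nu(X_1,\cdot)\equiv 0$, hence $X_1\in r_\nu$ for every $\nu\in\g^*$; therefore $1$ is never a jump index, i.e.\ $1\in Q$, and $\nu_1=\nu(X_1)$ is one of the coordinates parametrizing the cross--section $\mathcal W\subseteq V_Q$. Writing $d\nu=d\nu_1\,d\nu''$ accordingly, I would combine the Plancherel formula for $G$ with the one--dimensional Plancherel theorem in $x_1$: testing against product functions $f(x_1,x')=a(x_1)\,b(x')$, for which the formula above reduces to $\pi_\nu(f)=\widehat a(\nu_1)\int_{\R^{d-1}}b(x')\,\pi_\nu(\exp(-x_2X_2-\dots-x_dX_d))\,dx'$, and letting $a$ range over $C_c(\R)$, one obtains by polarization and density that for almost every $\lambda\in\R$
\bes
\int_{\mathcal W\cap\{\nu_1=\lambda\}}\|\pi_\nu(f)\|_{HS}^2\,|Pf(\nu)|\,d\nu''=\|\widetilde f(\lambda,\cdot)\|_{L^2(\R^{d-1})}^2 \qquad (f\in C_c(G)).
\ees
Hypothesis (\ref{nilest}) then reads $\int_{\R}e^{2\psi(|\lambda|)}\,\|\widetilde f(\lambda,\cdot)\|_{L^2(\R^{d-1})}^2\,d\lambda<\infty$, so by Tonelli's theorem there is a set of full Lebesgue measure of $x'\in\R^{d-1}$ for which $\int_{\R}e^{2\psi(|\lambda|)}\,|\widetilde f(\lambda,x')|^2\,d\lambda<\infty$.

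With this in hand the first assertion follows just as in the proof of Theorem \ref{paleywiener}. Fix such an $x'$ and set $h:=f(\cdot,x')$; since $f\in C_c(G)$, $h\in C_c(\R)$ is supported in a fixed interval $[-R,R]\subseteq(-\infty,R]$, and $\widehat h=\widetilde f(\cdot,x')$, so $\widehat h\,e^{\psi(|\cdot|)}\in L^2(\R)$. Writing $|\log|=\log^++\log^-$, using the Cauchy--Schwarz inequality (with $(1+\lambda^2)^{-1}\in L^2(\R)$) in place of the $L^1$ estimate used there, and using that $I=\infty$ is equivalent to $\int_{\R}\psi(|\lambda|)(1+\lambda^2)^{-1}\,d\lambda=\infty$, one deduces $\int_{\R}\frac{|\log|\widehat h(\lambda)||}{1+\lambda^2}\,d\lambda=\infty$; hence $h\equiv 0$ by the one--dimensional Paley--Wiener theorem (Theorem 2.7, \cite{BRS}). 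As this holds for almost every $x'$ and $f$ is continuous, $f\equiv 0$. For the converse, assume $\psi$ is non--decreasing and $I<\infty$: extending $\psi$ to an even function on $\R$ and applying Theorem \ref{levrn}(b) with $d=1$, $q=2$, $N=0$ yields a nontrivial $g\in C_c^\infty(\R)$ with $\int_{\R}|\widehat g(\lambda)|^2e^{2\psi(|\lambda|)}\,d\lambda<\infty$. Choosing any nontrivial $\chi\in C_c^\infty(\R^{d-1})$ and setting $f(x_1,x')=g(x_1)\chi(x')$, one has $f\in C_c(G)\setminus\{0\}$ and $\widetilde f(\lambda,x')=\widehat g(\lambda)\chi(x')$, so the displayed disintegration identity gives
\bes
\int_{\mathcal W}\|\pi_\nu(f)\|_{HS}^2\,e^{2\psi(|\nu_1|)}\,|Pf(\nu)|\,d\nu=\|\chi\|_{L^2(\R^{d-1})}^2\int_{\R}|\widehat g(\lambda)|^2e^{2\psi(|\lambda|)}\,d\lambda<\infty .
\ees

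The step I expect to be the main obstacle is the disintegration identity of the second paragraph, namely identifying the restriction of the Plancherel measure to a generic fibre $\{\nu_1=\lambda\}$, transported through $f\mapsto\pi_\nu(f)$, with the ordinary $L^2(\R^{d-1})$--norm of the slice $\widetilde f(\lambda,\cdot)$. This rests on the structure theory of \cite{CG,BB} (the splitting $\g^*=V_Q\oplus V_P$, the cross--section $\mathcal W$, and the fact that $1\in Q$ because $X_1$ is central) together with the normalization of the Plancherel measure; once it is in place, the remainder is a routine transcription of arguments already used for Theorems \ref{paleywiener} and \ref{levrn}.
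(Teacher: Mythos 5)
Your proposal is correct in substance and rests on the same pivot as the paper, but it executes both directions differently. The ``disintegration identity'' you single out as the main obstacle is not something you need to build from scratch: it is exactly Lemma \ref{lemnil} (Lemma 3.2 of \cite{BTH}), since $\widetilde f(\nu_1,\cdot)$ is precisely the family $\widehat{f_y}(\nu_1)$, so $\|\widetilde f(\nu_1,\cdot)\|_{L^2(\R^{d-1})}^2=\widehat g(\nu_1)$ in the paper's notation; your sketched derivation via product functions, polarization and density is plausible but incomplete as written (the passage from $a\otimes b$ to general $f\in C_c(G)$, and the measurability/finiteness of the fibre quadratic form, need more than you state), and the cleanest fix is simply to invoke the cited lemma. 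Granting that identity, the two arguments diverge: the paper integrates out the non-central variables to form the single auxiliary function $g(t)=\int_{\R^{d-1}}(f_y*f_y^*)(t)\,dy\in C_c(\R)$, notes $\widehat g\geq 0$ and $f\equiv 0$ iff $g\equiv 0$, and applies Theorem \ref{paleywiener} once to $g$ (whose hypothesis (\ref{pwqest}) with $q=1$, $N=0$ is exactly what Lemma \ref{lemnil} plus (\ref{nilest}) delivers); you instead run the log-integral argument slice by slice, for almost every $x'$, after a Tonelli step and a Cauchy--Schwarz bound for the $\log^+$ part --- this is sound (indeed you could shorten it by citing Theorem \ref{paleywiener} with $q=2$, $N=0$, $d=1$ for each slice) but repeats work the paper has already packaged. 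For the converse, the paper takes $g$ from Theorem \ref{introthm}(b), forms the central convolution $f(x)=\int_Z g(t)h(t^{-1}x)\,dt$ with $h\in C_c(G)$, and uses the Kaniuth--Kumar relation (\ref{kkreln}) from \cite{KK} together with the Plancherel formula for $h$; your product construction $f=g\otimes\chi$ with $g$ from Theorem \ref{levrn}(b) at $q=2$, $N=0$ is a legitimate alternative, and correctly uses the $L^2$-weighted version rather than the pointwise bound (which alone would not give (\ref{nilest})), but it again leans on the disintegration identity, so in the end both routes stand or fall with Lemma \ref{lemnil}. The trade-off: the paper's averaging trick needs only the lemma as stated (for all $\nu_1\in\mathcal O$) and one application of Theorem \ref{paleywiener}; your slice-wise version is slightly more flexible in spirit but requires either the same lemma or a careful completion of your density argument.
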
 

To prove the theorem we need some more notation. If $r$ is the dimension of coadjoint orbits $O_\nu$, $\nu\in \mathcal{U}$, then $Q$ has $d-r$ elements and so $V_Q$ can be identified with $\R^{d-r}$. In an abuse of notation we write $V_Q= \R X_1^*\oplus \R ^{d-r-1}$ and let 
\bes
p^*:V_Q\rightarrow \R X_1^*,\:\:\:\: \nu\mapsto \nu_1 X_1^* 
\ees
denote the canonical projection. As $\mathcal W$ is a Zariski open set in $V_Q$, $p^*(\mathcal W)=\mathcal O$ is also a nonempty Zariski open subset of $\R$. In particular, this is a subset in $\R$ with full Lebesgue measure. So, it will be convenient to write elements $\nu\in \mathcal W$ as $(\nu_1, \nu')$, where $\nu_1\in \mathcal O$ and $\nu'$ is in the set $\mathcal W_{\nu_1}$ defined by 
\bes
\mathcal W_{\nu_1}= \{\nu'\in \R^{n-d-1}: (\nu_1, \nu')\in \mathcal W\}.
\ees
It turns out that $\mathcal W_{\nu_1}$ is also a Zariski open subset of $\R^{n-d-1}$, for each fixed $\nu_1\in \mathcal O$. Given $f\in C_c(G)$ and the Jordan-H\"older basis $\{X_1, \cdots, X_d\}$ given in (\ref{JHB}) we define for each $y\in \R^{d-1}$ 
\bes
f_y(t)=f\left(\exp\left(tX_1+\sum_{j=2}^{d}{y_jX_j}\right)\right), \:\:\:\: \textit{ for } t\in \R,
\ees
and $f_y^*(t)=\overline{f_y(-t)}$. We now consider the function
\bes
g(t)=\int_{\mathbb{R}^{d-1}}{(f_y*f_y^*)(t)dy}, \:\:\:\: \textit{for } t\in \R.
\ees
Since $f\in C_c(G)$ it follows that  $g\in C_c(\R)$. In particular, $g \in L^1(\R)$. We now have the following lemma.

\begin{lem}(\cite{BTH}, Lemma 3.2) \label{lemnil}
For all $\nu_1\in \mathcal O$,
\bes
\widehat g(\nu_1)= \int_{\mathcal W_{\nu_1}} \|\pi_{\nu}(f)\|_{HS}^2 ~ |Pf(\nu)| ~ d\nu',
\ees
where $\widehat g$ is the one dimensional Fourier transform of $g$.
\end{lem}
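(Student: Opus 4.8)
The plan is to split the identity into an elementary Fourier-analytic reduction in the central direction and a \emph{fiberwise} version of the Plancherel formula for $G$, deducing the latter from the global Plancherel formula recorded above by a disintegration argument. Throughout, for $\phi\in C_c(G)$ I write $\phi^{\nu_1}(y)=\int_\R \phi\big(\exp(tX_1)\exp(\sum_{j\ge2}y_jX_j)\big)\,e^{-2\pi i\nu_1 t}\,dt$ for the partial Fourier transform in the central variable.

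First I would compute the left-hand side directly. Since $g$ is a superposition of the one-dimensional convolutions $f_y*f_y^*$ and the Fourier transform of $f_y^*$ is $\overline{\widehat{f_y}}$, the convolution theorem gives $\widehat{f_y*f_y^*}(\nu_1)=|\widehat{f_y}(\nu_1)|^2$, and interchanging the (compactly supported, hence justified) integrations yields $\widehat g(\nu_1)=\int_{\R^{d-1}}|\widehat{f_y}(\nu_1)|^2\,dy$. Because $X_1$ is central, the Baker--Campbell--Hausdorff formula gives $\exp(tX_1+\sum_{j\ge2}y_jX_j)=\exp(tX_1)\exp(\sum_{j\ge2}y_jX_j)$, so $\widehat{f_y}(\nu_1)=f^{\nu_1}(y)$ and hence $\widehat g(\nu_1)=\|f^{\nu_1}\|_{L^2(\R^{d-1})}^2$. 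The same central splitting shows, using $\pi_\nu(\exp(tX_1))=e^{2\pi i\nu_1 t}\,\mathrm{Id}$ (Schur's lemma), that $\pi_\nu(f)=\int_{\R^{d-1}}f^{\nu_1}(y)\,\pi_\nu(\exp(-\sum_{j\ge2}y_jX_j))\,dy$ depends on $f$ only through $f^{\nu_1}$. Thus the lemma is equivalent to the fiberwise Plancherel identity $\|f^{\nu_1}\|_2^2=\int_{\mathcal W_{\nu_1}}\|\pi_\nu(f)\|_{HS}^2\,|Pf(\nu)|\,d\nu'$.

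To prove this fiberwise identity I would disintegrate the global Plancherel formula over the central coordinate $\nu_1$. Writing $d\mu=|Pf(\nu)|\,d\nu_1\,d\nu'$ and $\mathcal W=\{(\nu_1,\nu'):\nu_1\in\mathcal O,\ \nu'\in\mathcal W_{\nu_1}\}$, the formula $\|\phi\|_2^2=\int_{\mathcal W}\|\pi_\nu(\phi)\|_{HS}^2\,d\mu$ combined with the one-dimensional Plancherel theorem in $t$ (which gives $\|\phi\|_2^2=\int_{\mathcal O}\|\phi^{\nu_1}\|_2^2\,d\nu_1$) yields $\int_{\mathcal O}\big(\|\phi^{\nu_1}\|_2^2-R_\phi(\nu_1)\big)\,d\nu_1=0$, where $R_\phi(\nu_1)=\int_{\mathcal W_{\nu_1}}\|\pi_\nu(\phi)\|_{HS}^2|Pf(\nu)|\,d\nu'$, for every $\phi\in C_c(G)$. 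To localize this in $\nu_1$ I would test against central modulations: replacing $\phi$ by its convolution $\phi*_Z k$ in the central variable with $k\in C_c(\R)$ multiplies both $\|\phi^{\nu_1}\|_2^2$ and $R_\phi(\nu_1)$ by the same factor $|\widehat k(\nu_1)|^2$, since $(\phi*_Z k)^{\nu_1}=\widehat k(\nu_1)\phi^{\nu_1}$ and therefore $\pi_\nu(\phi*_Z k)=\widehat k(\nu_1)\pi_\nu(\phi)$. Hence $\int_{\mathcal O}|\widehat k(\nu_1)|^2\big(\|\phi^{\nu_1}\|_2^2-R_\phi(\nu_1)\big)\,d\nu_1=0$ for all such $k$, and as the functions $|\widehat k|^2$ are rich enough to separate measures on $\mathcal O$, the integrand vanishes for almost every $\nu_1$. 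Taking $\phi=f$ gives $\widehat g(\nu_1)=R_f(\nu_1)$ for a.e.\ $\nu_1\in\mathcal O$, and the continuity of both sides in $\nu_1$ (as $g\in L^1(\R)$ and $\nu\mapsto\pi_\nu(f)$ is continuous for $f\in C_c(G)$) upgrades this to every $\nu_1\in\mathcal O$.

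The main obstacle is exactly the localization in $\nu_1$: the global Plancherel formula delivers the identity only after integrating over the central parameter, so the heart of the matter is passing from the integrated equality to the pointwise one. The modulation/disintegration step above is the device that accomplishes this, but it requires care in justifying (i) that central convolution intertwines as claimed with both the partial Fourier transform and the operator-valued transform, and (ii) that $\{|\widehat k|^2\}$ suffices to force almost-everywhere equality; the final continuity upgrade then relies on the (standard, for $f\in C_c(G)$) joint regularity of $\nu\mapsto\pi_\nu(f)$ in Hilbert--Schmidt norm together with the polynomial nature of $Pf$.
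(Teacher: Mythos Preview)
The paper does not supply its own proof of this lemma; it is quoted from \cite{BTH}, Lemma~3.2, and used as a black box in the proof of Theorem~\ref{nilpotent}. Your first reduction, $\widehat g(\nu_1)=\int_{\R^{d-1}}|\widehat{f_y}(\nu_1)|^2\,dy$, is in fact carried out explicitly in the paper immediately after the lemma statement, so on that step you and the paper agree. The remaining content---the fiberwise Plancherel identity $\int_{\R^{d-1}}|\widehat{f_y}(\nu_1)|^2\,dy=\int_{\mathcal W_{\nu_1}}\|\pi_\nu(f)\|_{HS}^2\,|Pf(\nu)|\,d\nu'$---is precisely what the paper imports from \cite{BTH}, so there is no in-paper argument to compare yours against.

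Your disintegration-and-modulation route to that fiberwise identity is sound. The separation step can be made precise as follows: the function $h(\nu_1):=\|\phi^{\nu_1}\|_2^2-R_\phi(\nu_1)$ lies in $L^1(\R)$ (both terms are nonnegative and have the same integral by the global Plancherel formula), and $\int |\widehat k|^2 h=0$ for every $k\in C_c(\R)$; polarization then gives $\int \widehat{k_1}\,\overline{\widehat{k_2}}\,h=0$ for all $k_1,k_2\in C_c(\R)$, and density of $\{\widehat{k_1}\}$ in $C_0(\R)$ forces $\overline{\widehat{k_2}}\,h=0$ a.e., hence $h=0$ a.e.\ since $\widehat{k_2}$ is entire with isolated real zeros. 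The one place your sketch is slightly optimistic is the continuity upgrade from ``almost every'' to ``every $\nu_1\in\mathcal O$'': continuity of $\nu_1\mapsto R_f(\nu_1)$ is not entirely automatic because the domain $\mathcal W_{\nu_1}$ itself varies with $\nu_1$, and one must invoke, for $f\in C_c(G)$, uniform decay of $\|\pi_\nu(f)\|_{HS}^2$ on $\mathcal U$ (e.g.\ via the Kirillov character formula). For the use made of the lemma in this paper the almost-everywhere statement already suffices.
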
 

We now present the proof of the theorem.

{\em Proof of Theorem \ref{nilpotent}.}
It follows from the definition of $g$ that
 
\bes
\widehat g(\xi)= \int_{\R^{d-1}}{|\widehat{f_y}(\xi)|^2dy}, \:\:\:\: \xi\in \R,
\ees
and consequently $\widehat g \geq 0$ with
\bes
\int_{\R}\widehat g(\xi)=\int_{\R^d}{|f_y(\xi)|^2dy~d\xi}= \|f\|_2^2.
\ees
Therefore the function $f$ is identically zero on $G$ if and only if $g$ is zero on $\R$. We now complete the proof by showing that $g$ vanishes identically. In order to do this we shall apply Theorem \ref{paleywiener} to the function $g$. By Lemma \ref{lemnil}, it follows that
\beas
\int_{\R} \widehat g(\nu_1) ~ e^{\psi(|\nu_1|)} ~ d\nu_1 &=& \int_{\mathcal O} \int_{\mathcal W_{\nu_1}} ~ \|\pi_{\nu}(f)\|_{HS}^2 ~ e^{\psi(|\nu_1|)} ~ |Pf(\nu)| ~ d\nu' ~ d\nu_1\nonumber\\
& = & \int_{\mathcal W} \|\pi_{\nu}(f)\|_{HS}^2 ~ e^{\psi(|\nu_1|)} ~ |Pf(\nu)| ~ d\nu.
\eeas
By the hypothesis (\ref{nilest}) the right-hand side integral is finite. Therefore by Theorem \ref{paleywiener} we conclude that $g$ is identically zero on $\R$ and hence $f$ vanishes identically.

Conversely, if $\psi$ is non decreasing and the integral $I$ is finite then by Theorem \ref{introthm}, (b), for a given $\delta$ positive there exists $g\in C_c(\R)$ with $\txt{supp } g \subset [-\delta, \delta]$ satisfying the estimate 
\be \label{nilgest}
|\widehat g(y)| \leq Ce^{-\psi(|y|)}, \:\:\:\:  y\in \R.
\ee
Let $Z=\exp(\mathbb{R}X_1)$ denote a central
subgroup of $G$ and we identify $Z$ with $\mathbb{R}$. Let $h\in C_c(G)$ with $\txt{supp } h \subset V$ where $V$ is a symmetric neighbourhood of the identity element in $G$. We now define a function $f$ on $G$ by 
\bes f(x)=\int_{Z} g(t) ~ h(t^{-1}x) ~ dt, \:\:\:\: x\in G.
\ees
Clearly, $f$ is continuous. Moreover, $f\in C_c(G)$, which follows from the fact that
\bes
\|t^{-1}x\| \geq \|x\|- \|t\|
\ees
as $t\in Z$. It can be shown (\cite{KK}, P. 490)  that 
\be \label{kkreln}
\|\pi_\nu(f)\|^2_{HS} = |\widehat g(\nu_1)|^2 ~ \|\pi_\nu(h)\|^2_{HS}, \:\:\:\: \txt{ for all } \nu \in \mathcal W.  
\ee
Therefore, from (\ref{nilgest}) and (\ref{kkreln}) it follows that 
\beas
& & \int_{\mathcal W} \|\pi_\nu(f)\|^2_{HS} ~ e^{2\psi(|\nu_1|)} ~ |Pf(\nu)| ~ d\nu \\
&=& \int_{\mathcal W} |\widehat g(\nu_1)|^2 ~ \|\pi_\nu(h)\|^2_{HS} ~ e^{2\psi(|\nu_1|)} ~ |Pf(\nu)| ~ d\nu \\
& \leq & C \int_{\mathcal W} \|\pi_\nu(h)\|^2_{HS} ~ |Pf(\nu)| ~ d\nu \\
& = & C ~ \|h\|_{L^2(G)}  < \infty.
\eeas  
This completes the proof. \qed

If the function $\psi$ is assumed to be nondecreasing then we have the following corollary.
\begin{cor}
Let $\psi: [0,\infty) \ra [0,\infty)$ be a nondecreasing function and 
\bes
I= \int_{1}^{\infty} \frac{\psi(t)}{t^2}~dt.
\ees
Suppose $f\in C_c(G)$ is such that 
\bes 
\int_{\mathcal W} \|\pi_{\nu}(f)\|^2_{HS} ~ e^{2\psi(\|\nu\|)} ~ |Pf(\nu)| ~ d\nu < \infty.
\ees 
If the integral $I$ is infinite then $f$ vanishes identically on $G$. 
\end{cor}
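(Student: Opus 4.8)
The plan is to deduce this directly from Theorem~\ref{nilpotent} by a pointwise comparison of the two exponential weights. First I would record the elementary observation that, for every $\nu \in \mathcal{W}$, the quantity $\nu_1 = \nu(X_1)$ is one of the Euclidean coordinates of $\nu$ relative to the dual basis $\{X_1^*, \dots, X_d^*\}$, so that $|\nu_1| \leq \|\nu\|$. Since $\psi$ is nondecreasing on $[0,\infty)$, this gives $\psi(|\nu_1|) \leq \psi(\|\nu\|)$, hence $e^{2\psi(|\nu_1|)} \leq e^{2\psi(\|\nu\|)}$ for all $\nu \in \mathcal{W}$. Multiplying by the nonnegative factor $\|\pi_\nu(f)\|_{HS}^2 \, |Pf(\nu)|$ and integrating over $\mathcal{W}$ yields
\bes
\int_{\mathcal{W}} \|\pi_\nu(f)\|_{HS}^2 \, e^{2\psi(|\nu_1|)} \, |Pf(\nu)| \, d\nu \;\leq\; \int_{\mathcal{W}} \|\pi_\nu(f)\|_{HS}^2 \, e^{2\psi(\|\nu\|)} \, |Pf(\nu)| \, d\nu \;<\; \infty,
\ees
where the finiteness on the right is precisely the hypothesis of the corollary. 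Thus $f$ satisfies the estimate \eqref{nilest}.

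Next I would note that a nondecreasing nonnegative function on $[0,\infty)$ is automatically locally bounded, hence locally integrable, so $\psi$ satisfies the standing hypothesis of Theorem~\ref{nilpotent}; moreover the integral $I$ in the corollary is literally the same integral $I$ appearing there. Since $I$ is assumed infinite, Theorem~\ref{nilpotent} applies verbatim and gives $f \equiv 0$ on $G$. There is essentially no obstacle: the only point to verify is the weight comparison above, and all the real content sits in Theorem~\ref{nilpotent} (and, through it, in Theorem~\ref{paleywiener} and the Paley--Wiener theorem of \cite{BRS}). The corollary merely trades the weight $e^{2\psi(|\nu_1|)}$, which privileges the central direction $X_1$, for the more symmetric weight $e^{2\psi(\|\nu\|)}$, at the cost---free, given the monotonicity of $\psi$---of assuming $\psi$ nondecreasing.
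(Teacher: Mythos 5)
Your proposal is correct and is essentially the paper's own argument: the paper likewise observes that $\psi$ nondecreasing gives $\psi(|\nu_1|)\leq\psi(\|\nu\|)$ (so the hypothesis of the corollary implies the weighted estimate (\ref{nilest})) and then invokes Theorem \ref{nilpotent}. Your additional remarks---that $|\nu_1|\leq\|\nu\|$ because $\nu_1$ is a coordinate of $\nu$, and that a nondecreasing function is locally integrable---are just the details the paper leaves implicit.
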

\begin{proof}
Since $\psi$ is nondecreasing $\psi(\|\nu\|) \geq \psi(|\nu_1|)$. Therefore, the result follows from Theorem \ref{nilpotent}.
\end{proof}

\textbf{Acknowledgement.} We would like to thank Swagato K. Ray and Suparna Sen for the many useful discussions during the course of this work.

\end{document}